\theoremstyle{plain}
\newtheorem{tw}{Theorem}
\theoremstyle{definition}
\newcommand{\bn}{\Bbb N}
\newcommand{\bz}{\Bbb Z}
\newcommand{\alg} {\mathsf{A}}
\newcommand {\hte} {{\textup{ht}}}
\newcommand{\clg} {\mathsf{C}}
\newcommand{\blg}{\mathsf{B}}
\newcommand{\mlg}{\mathsf{M}}
\newcommand {\Aut} {{\textrm{Aut}}}
\newcommand{\tu}{\textup}
\newcommand{\Kil}{\mathsf{K}}
\newenvironment{rlist}
{

\begin{enumerate}}
{\end{enumerate}}
\newcommand{\ot}{\otimes}
\newcommand{\wt}{\widetilde}
\begin{document}

\author{Adam Skalski}
\footnote{\emph{Permanent address of the author:} Department of Mathematics, University of \L\'{o}d\'{z}, ul. Banacha 22, 90-238
\L\'{o}d\'{z}, Poland.}
\address{Department of Mathematics and Statistics, Lancaster University, Lancaster LA1 4YF, United Kingdom }
\email{a.skalski@lancaster.ac.uk}
\title{\bf On automorphisms of $C^*$-algebras whose Voiculescu entropy is genuinely noncommutative}

\keywords{Noncommutative topological entropy, $C^*$-dynamical systems} \subjclass[2000]{Primary 46L55}

\begin{abstract}
\noindent We use the results of Neshveyev and St\o rmer to show that for a  generic shift on a $C^*$-algebra associated to a
bitstream the Voiculescu topological entropy is strictly larger that the supremum of topological entropies of its classical
subsystems.
\end{abstract}

\maketitle

Let $\alg$ be an (exact) $C^*$-algebra, $\alpha \in \Aut(\alg)$ and let $\hte(\alpha)$ denote the Voiculescu's topological
entropy of $\alpha$ (\cite{Voic}). The usual method of computing $\hte(\alpha)$ is based on two steps. First one produces an
explicit or semi-explicit approximating net through matrix algebras whose rank can be controlled and thus provides an estimate
from above (say given by $M>0$). Then one looks for an $\alpha$-invariant $C^*$-subalgebra $\clg \subset \alg$ such that
$\hte(\alpha|_{\clg})$ is known to be equal to $M$. Of course the second step is in a sense recurrent, as we need to be able to
determine the entropy of $\alpha|_{\clg}$. This becomes much easier when $\clg$ is a commutative algebra, as then
$\alpha|_{\clg}$ is induced by a homeomorphism $T$ of the spectrum of $\clg$ and it was shown in \cite{Voic} that
$\hte(\alpha|_{\clg})= h_{\tu{top}}(T)$ -- note that the general difficulty in understanding how the positive Voiculescu entropy
is produced is reflected in the fact that there is still no direct proof of the inequality $\hte(\alpha_T)\geq h_{\tu{top}}(T)$,
the argument of \cite{Voic} exploits the properties of dynamical state entropy and classical variational principle. Other
connections between the appearance of a non-zero noncommutative entropy and commutativity can be seen in \cite{HaS} (where the
occurrence of maximal entropy for a system of subalgebras is related to existence of suitable maximally abelian subalgebras) and
in \cite{free} (where free shifts are shown to have zero Voiculescu entropy).

One could therefore be tempted to believe that actually  \begin{equation}\hte(\alpha) =
\hte_c(\alpha),\label{coment}\end{equation} where
\[ \hte_c(\alpha) =  \sup \left\{\hte(\alpha|_{\clg}): \clg - \textrm{commutative } \alpha-\textrm{invariant } C^*-\textrm{subalgebra of  }
\alg\right\}.\]

The aim of this note is to observe that the bit-stream shifts investigated in Chapter 12  of \cite{book} (see also \cite{GolSto},
\cite{NST}) giving counterexamples to the tensor product formula for Connes-Narnhofer-Thirring entropy also provide
counterexamples to the equality in \eqref{coment}.

Before we state the result let us quickly summarise the properties of the bit-stream shifts  that we will need in what follows.
Let $X\subset \bn$. Then $X$ determines in a natural way a bicharacter $\omega$ on $G:=\bigoplus_{\bz} \bz_2$ and hence a
(nuclear) twisted $C^*$-algebra $A(X)=C^*(G, \omega)$ with a canonical faithful tracial state $\tau$. The algebra $A(X)$ is
equipped with a $\tau$-preserving shift-type automorphism $\sigma_X$. In Chapter 12 of \cite{book} it was shown that for almost
every (with respect to the Bernoulli measure on $2^{\bn}\approx\prod_{n \in \bn}\{0,1\}$) nonperiodic $X\subset \bn$ the
automorphism $\sigma_X$ has the following properties:
\begin{rlist}
\item $\tau$ is the unique $\sigma_X$-invariant state on $A(X)$;
\item $h_{\tau} (\sigma_X) = 0$;
\item $h_{\tau \ot \tau} (\sigma_X \ot \sigma_X) = \log 2$.
\end{rlist}

Here and in what follows $h_{\phi}(\alpha)$ denotes the noncommutative state entropy of Connes, Narnhofer and Thirring introduced
in \cite{CNT} ($\phi$ is an $\alpha$-invariant state).

\begin{tw} \label{thm}
For almost every nonperiodic set $X\subset \bn$ the binary shift $\sigma_X$ satisfies $\hte_c (\sigma_X)=0 < \frac{\log 2}{2}\leq
\hte(\sigma_X)$.
\end{tw}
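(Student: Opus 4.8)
The plan is to establish the two outer estimates separately: the vanishing $\hte_c(\sigma_X)=0$ on the one hand, and the lower bound $\hte(\sigma_X)\ge\frac{\log 2}{2}$ on the other (the strict middle inequality $0<\frac{\log 2}{2}$ being trivial). The two draw on disjoint circles of ideas: property (ii) together with classical ergodic theory for the vanishing, and property (iii) together with the comparison between the Connes--Narnhofer--Thirring and Voiculescu entropies for the lower bound.

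For the lower bound I would combine two known facts. The first is the inequality $h_\phi(\alpha)\le\hte(\alpha)$, valid for any $\alpha$-invariant (for our purposes, tracial) state $\phi$ on an exact $C^*$-algebra, which is the Neshveyev--St\o rmer comparison of state entropy with topological entropy; the second is the subadditivity of the Voiculescu entropy under tensor products, $\hte(\alpha\ot\beta)\le\hte(\alpha)+\hte(\beta)$. Applying the first to the (tracial, hence admissible) invariant state $\tau\ot\tau$ of $\sigma_X\ot\sigma_X$ on the nuclear, hence exact, algebra $A(X)\ot A(X)$, and then the second, yields
\[ \log 2 = h_{\tau\ot\tau}(\sigma_X\ot\sigma_X)\le\hte(\sigma_X\ot\sigma_X)\le 2\,\hte(\sigma_X), \]
whence $\hte(\sigma_X)\ge\frac{\log 2}{2}$. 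Here property (iii) does all the work, and it is precisely the failure of additivity of the CNT entropy (contrast $h_\tau(\sigma_X)=0$) that forces the topological entropy to be positive.

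For the vanishing I would fix an arbitrary commutative $\sigma_X$-invariant $C^*$-subalgebra $\clg\subset A(X)$ and show $\hte(\sigma_X|_\clg)=0$. Since $\clg$ is commutative, $\sigma_X|_\clg$ is induced by a homeomorphism $T$ of the spectrum $\widehat{\clg}$, and by Voiculescu's theorem $\hte(\sigma_X|_\clg)=h_{\tu{top}}(T)$. The crucial observation is that $\sigma_X|_\clg$ inherits unique ergodicity from property (i): given any $\sigma_X|_\clg$-invariant state $\psi$ on $\clg$, extend it to a state on $A(X)$ and pass to a weak$^*$ limit point of its Ces\`aro averages under $\sigma_X$; this limit is $\sigma_X$-invariant, hence equals $\tau$ by (i), while its restriction to $\clg$ is still $\psi$ because $\psi$ was already invariant. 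Thus $\tau|_\clg$ is the unique invariant state, so $T$ is uniquely ergodic with invariant measure $\mu$ corresponding to $\tau|_\clg$. The classical variational principle then collapses to $h_{\tu{top}}(T)=h_\mu(T)$, and identifying the Kolmogorov--Sinai entropy $h_\mu(T)$ with the CNT entropy of the abelian system and using monotonicity of the latter under invariant subalgebras gives
\[ h_{\tu{top}}(T)=h_\mu(T)=h_{\tau|_\clg}(\sigma_X|_\clg)\le h_\tau(\sigma_X)=0 \]
by property (ii). Taking the supremum over $\clg$ yields $\hte_c(\sigma_X)=0$.

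The main obstacle is conceptual rather than computational, and it concerns the lower bound: because $\hte_c(\sigma_X)=0$, no commutative invariant subalgebra can witness any positive entropy, so the usual second step of the computation sketched in the introduction is simply unavailable. One is forced to detect the positivity indirectly, through the tensor square, where the non-additive CNT entropy of property (iii) becomes visible, and the inequality $h_\phi\le\hte$ is exactly the bridge converting this measure-theoretic datum back into a statement about the topological entropy. The verification that each classical subsystem is uniquely ergodic is the only genuinely new, if elementary, step; the remaining ingredients are standard properties assembled from the cited literature.
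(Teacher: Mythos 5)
Your lower bound $\hte(\sigma_X)\geq\frac{\log 2}{2}$ is exactly the paper's argument: property (iii), the domination $h_{\tau\ot\tau}\leq\hte$ on the exact algebra $A(X)\ot A(X)$, and subadditivity of $\hte$ under tensor products. Your treatment of $\hte_c(\sigma_X)=0$ also follows the paper's outline (unique ergodicity of each commutative invariant subsystem via Ces\`aro averages and property (i), then the variational principle reducing everything to $h_{\tau|_{\clg}}(\sigma_X|_{\clg})=0$), and those parts are fine. But the final step, where you write $h_{\tau|_{\clg}}(\sigma_X|_{\clg})\leq h_{\tau}(\sigma_X)$ ``using monotonicity of the latter under invariant subalgebras,'' contains a genuine gap: the CNT entropy is \emph{not} known to be monotone under passing to an arbitrary invariant $C^*$-subalgebra. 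Monotonicity is only established for \emph{expected} subalgebras, i.e.\ those onto which there is a state-preserving conditional expectation (the point being that a decomposition of $\tau|_{\clg}$ into positive functionals on $\clg$, which enters the supremum defining the CNT mutual entropy, need not extend to a decomposition of $\tau$ on $\alg$ unless one can pull it back through such an expectation). A commutative $\sigma_X$-invariant $C^*$-subalgebra $\clg\subset A(X)$ is not assumed to be expected, and at the $C^*$-level there is no reason for a $\tau$-preserving expectation onto $\clg$ to exist, even though $\tau$ is a trace. This is precisely the difficulty the paper flags explicitly, and it is the only non-routine point of the whole proof.

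The paper's repair is to move to the von Neumann algebraic setting: pass to the GNS representation $(\pi_{\tau},L^2(\alg,\tau),\Omega_{\tau})$, where Theorem 3.2.2(ii) of Neshveyev--St\o rmer identifies $h_{\tau}(\sigma)$ and $h_{\tau|_{\clg}}(\sigma_{\clg})$ with the CNT entropies of the induced normal automorphisms of $\pi_{\tau}(\alg)''$ and of the GNS von Neumann algebra of $(\clg,\tau|_{\clg})$ respectively; then one checks that compression by the projection onto $\textup{cl}(\pi_{\tau}(\clg)\Omega_{\tau})$ gives a covariant, state-preserving isomorphism between $\pi_{\tau}(\clg)''$ and that GNS algebra; and finally, \emph{because $\tau$ is a trace}, there is a (unique) $\wt{\tau}$-preserving conditional expectation from $\pi_{\tau}(\alg)''$ onto the von Neumann subalgebra $\pi_{\tau}(\clg)''$, so the expected-subalgebra monotonicity theorem applies there. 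To complete your proof you must either insert this von Neumann algebraic detour or supply some other justification for the monotonicity inequality; as written, the inequality is asserted in a generality in which it is not a known theorem.
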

\begin{proof}
Let $X$ be a  subset of $\bn$ such that $\sigma_X$ satisfies the conditions $\tu{(i)--(iii)}$ stated before the theorem. We will
write $\alg:=A(X)$, $\sigma:=\sigma_X$.

The upper estimate follows immediately from property (iii) above, subadditivity of Voiculescu's topological entropy with respect
to tensor products (Theorem 6.2.2 (iv) of \cite{book}) and the fact that it dominates the CNT entropy (Theorem 6.2.7 in
\cite{book}).

Let then $\clg$ be a commutative $\sigma$-invariant $C^*$-subalgebra of $\alg$. Write $\sigma_{\clg}$ for $\sigma|_{\clg}$. Due
to the
classical variational principle 
it suffices to show that $h_{\phi}(\sigma_{\clg})=0$ for every $\sigma_{\clg}$-invariant state $\phi$ on $\clg$. We claim first
that invariance of $\phi$ implies it is equal to $\tau|_{\clg}$. Indeed, suppose that $\psi$ is an arbitrary state on $\alg$
extending $\phi$. Let $\wt{\psi}$ be a weak$^*$-limit point of the sequence $(\frac{1}{n} \sum_{i=1}^n \psi \circ
\sigma^i)_{n=1}^{\infty}$. Then $\wt{\psi}$ is $\sigma$-invariant and extends $\phi$. By condition (i) above $\wt{\psi}=\tau$ so
$\phi=\tau|_{\clg}$.

We are almost done, as due to property (ii) before the statement of the theorem we have $h_{\tau}(\sigma)=0$. We need however to
remember that the CNT entropy is known to be monotone only under passing to the expected invariant subalgebras and we did not
assume that $\clg$ is expected. The traciality of $\tau$ tells us that we have a $\tau$-preserving expectation as soon as we pass
to the von Neumann algebraic setting -- this will be exploited in the last part of the proof.

Let $(\pi_{\tau}, L^2(\alg, \tau), \Omega_{\tau})$ be a GNS triple for $(\alg, \tau)$, let $\wt{\tau}$ denote the extension of
$\tau$ to $\pi_{\tau}(\alg)''$ (given by the vector state associated with $\Omega_{\tau}$) and let $\wt{\sigma}$ denote the
unique normal automorphism of $\pi_{\tau}(\alg)''$ satisfying $\wt{\sigma} (\pi_{\tau} (a)) = \pi_{\tau}(\sigma(a))$ for all $a
\in \alg$.  By Theorem 3.2.2 (ii) in \cite{book}

\begin{equation} \label{eq1}h_{\tau} (\sigma) = h_{\wt{\tau}} (\wt{\sigma}).\end{equation}
Note that $\wt{\sigma}$ leaves $\pi_{\tau}(\clg)''$ invariant.

Let $p\in B(L^2(\alg, \tau))$ be the orthogonal projection onto  $\Kil:=\textup{cl}(\pi_{\tau}(\clg) \Omega_{\tau})$. Then $p \in
\pi_{\tau}(\clg)'$. It is easy to see that $(\pi_{{\clg}}:=\pi_{\tau}|_{\clg}, \Kil, \Omega_{\tau})$ is a GNS triple for $(\clg,
\tau|_{\clg})$. Denote by $\hat{\tau}\in {\pi_{\clg}(\clg)''}_*$, $\hat{\sigma}_{\clg}\in \Aut(\pi_{\clg}(\clg)'')$ corresponding
normal extensions of $\tau|_{\clg}$ and $\sigma$. Again by Theorem 3.2.2 (ii) in \cite{book}
\begin{equation} \label{eq2} h_{\tau|_{\clg}} (\sigma_{\clg}) =
h_{\hat{\tau}} (\hat{\sigma}_{\clg}).\end{equation} It remains to observe that the map $\Psi: \pi_{\tau}(\clg)'' \to
\pi_{\clg}(\clg)'' $ given by $m \mapsto mp$ is a covariant isomorphism of $\pi_{\tau}(\clg)''$ and  $\pi_{\clg}(\clg)''$, by
which we mean that  $\Psi \circ \wt{\sigma}|_{\pi_{\tau}(\clg)''} = \hat{\sigma}_{\clg} \circ \Psi$ and $\hat{\tau} \circ\Psi=
\wt{\tau}|_{\clg}$. Indeed, the fact that $\Psi$ is injective follows from the faithfulness of $\tau$, whereas the surjectivity
and covariance properties are consequences of the normality of all maps involved and the fact that the corresponding statements
clearly hold on $\pi_{\tau}(\clg)$. Thus we have
\begin{equation} \label{eq3} h_{\wt{\tau}|_{\pi_{\tau}(\clg)''}}(\wt{\sigma}|_{\pi_{\tau}(\clg)''}) =
h_{\hat{\tau}} (\hat{\sigma}_{\clg}).\end{equation} As $\tau$ is a trace,
there exists
a (unique) $\tau$-preserving conditional expectation from $\pi_{\tau}(\alg)''$ onto $\pi_{\tau}(\clg)''$. Therefore by Theorem
3.2.2 (v) in \cite{book}
\begin{equation} \label{eq4} h_{\wt{\tau}|_{\pi_{\tau}(\clg)''}}(\wt{\sigma}|_{\pi_{\tau}(\clg)''}) \leq
h_{\wt{\tau}} (\wt{\sigma}).\end{equation} Putting together equations \eqref{eq1}-\eqref{eq4} yields
\[ h_{\tau|_{\clg}} (\sigma_{\clg}) \leq h_{\tau} (\sigma)=0\]
and the proof is finished.

\end{proof}

Note that considering a direct sum of the dynamical system satisfying the conditions in Theorem \ref{thm} with a noncommutative
dynamical system $(\blg, \gamma)$ such that $\hte(\gamma) = \hte_c(\gamma) \in (0, \frac{\log 2}{2})$ one can obtain examples in
which $\hte(\alpha) > \hte_c(\alpha) >0$.

One version of the analogous problem for the Connes-St\o rmer entropy would be the following -- let $\mlg$ be the hyperfinite
$II_1$-factor with the trace $\tau$. Does there exist a trace-preserving automorphism $\alpha \in \tu{Aut}(\mlg)$ such that
\[
h_{\tau} (\alpha) > \sup \left\{h_{\tau}(\alpha|_{\clg}): \clg - \alpha-\textrm{invariant } \textrm{ masa in } \mlg\right\}?\] In
\cite{GolNesh} the authors gave examples of automorphisms $\alpha$ of the hyperfinite $II_1$-factor such that $h_{\tau} (\alpha)
> h_{\tau}(\alpha|_{\clg})$, where $\clg$ is a fixed $\alpha$-invariant Cartan subalgebra of $\mlg$ (other examples of such situation
arise for permutation endomorphisms of Cuntz algebras, as can be deduced from \cite{LMP}), but the  problem stated above remains
open.

 In view of Theorem \ref{thm} it is also natural to ask the following question: do  there exist a (say unital)
$C^*$-algebra $\alg$ and $\alpha \in \Aut(\alg)$ such that $\hte(\alpha)>0$ and $\alg$ does not admit any nontrivial
$\alpha$-invariant abelian subalgebras?

\vspace*{0.5cm}

\noindent \emph{Acknowledgment.} The note was written during a visit of the author to University of Tokyo in October-November
2009 funded by a JSPS Short Term Postdoctoral Fellowship. The author would like to thank Sergey Neshveyev for useful comments on
the first draft.




\begin{thebibliography}{Voic}



\bibitem[BDS]{free} N.\,Brown, K.\,Dykema and D.\,Shlyakhtenko, Topological entropy of free product automorphisms,
 \emph{Acta Math.} \textbf{189} (2002), 1--35.

\bibitem[CNT]{CNT} A.\,Connes, H.\,Narnhofer and W.\,Thirring, Dynamical entropy of $C^*$-algebras and von Neumann algebras,
 \emph{Comm.\,Math.\,Phys.} \textbf{112} (1987), no.\,2, 691--719.

\bibitem[GN]{GolNesh} V.Ya.\,Golodets and S.\,Neshveyev, Entropy of automorphisms of ${\rm II}_1$-factors arising from the dynamical systems theory,
\emph{J.\,Funct.\,Anal.} \textbf{181}  (2001),  no. 1, 14--28.

\bibitem[GS]{GolSto} V.Ya.\,Golodets and E.\,St\o rmer, Entropy of $C^*$-dynamical systems generated by bitstreams,
\emph{Ergodic Theory Dynam.\,Systems} \textbf{18} (1998), 859--874.

\bibitem[HS]{HaS} U.\,Haagerup and E.\,St\o rmer, Maximality of entropy in finite von Neumann algebras, \emph{Invent.\,Math.}
\textbf{132} (1998), 433--455.

\bibitem[NS] {book} S.Neshveyev and E.St\o rmer, ``Dynamical entropy in operator algebras,''
Ergebnisse der Mathematik und ihrer Grenzgebiete. 3. Folge. A Series of Modern Surveys in Mathematics, 50. Springer-Verlag,
Berlin, 2006.

\bibitem[NST]{NST} H.\,Narnhofer,  E.\,St\o rmer and W.\,Thirring, $C^*$-dynamical system for which the tensor product formula for entropy fails,
\emph{Ergodic Theory Dynam.\,Systems} \textbf{15} (1995), 961--968.

\bibitem[SZ] {LMP}
A.\,Skalski and J.\,Zacharias, Noncommutative topological entropy of endomorphisms of Cuntz algebras, \emph{Lett.\,Math.\,Phys.}
\textbf{86} (2008),  no. 2-3, 115--134.

\bibitem [V] {Voic} D.\,Voiculescu, Dynamical approximation entropies and topological entropy in operator algebras,
 \emph{Comm.\,Math.\,Phys.} \textbf{170} (1995), no.\,2, 249--281.

\end{thebibliography}
\end{document}